\definecolor{codegreen}{rgb}{0,0.6,0}
\definecolor{codegray}{rgb}{0.5,0.5,0.5}
\definecolor{codepurple}{rgb}{0.58,0,0.82}
\definecolor{backcolour}{rgb}{0.95,0.95,0.92}
\lstdefinestyle{cppstyle}{
    backgroundcolor=\color{backcolour},
    commentstyle=\color{codegreen},
    keywordstyle=\color{blue},
    numberstyle=\tiny\color{codegray},
    stringstyle=\color{codepurple},
    basicstyle=\ttfamily\footnotesize,
    breakatwhitespace=false,
    breaklines=true,
    captionpos=b,
    keepspaces=true,
    numbers=left,
    numbersep=5pt,
    showspaces=false,
    showstringspaces=false,
    showtabs=false,
    tabsize=4
}
\title{Distribution of independent sets in perfect $r$-ary trees}
\author{Daniel I\v{l}kovi\v{c}\thanks{Institute of Mathematics, Leipzig University, Augustusplatz 10, 04109 Leipzig, Germany. Mathematics Institute, University of Warwick, Coventry CV4 7AL, UK. \url{daniel.ilkovic@warwick.ac.uk}. Supported by the Alexander von Humboldt Foundation in the framework of the Alexander von Humboldt Professorship of Daniel Krá\v{l} endowed by the Federal Ministry of Education and Research.} \and Jun Yan\thanks{Mathematical Institute, University of Oxford, Oxford OX2 6GG, UK. Email: \url{jun.yan@maths.ox.ac.uk}. Supported by
ERC Advanced Grant 883810.}}
\newtheorem{theorem}{Theorem}[section]
\newtheorem{lemma}[theorem]{Lemma}
\newtheorem{conjecture}[theorem]{Conjecture}
\newtheorem{question}[theorem]{Question}
\theoremstyle{definition}
\newtheorem{definition}[theorem]{Definition}
\newcommand{\cI}{\mathcal{I}}
\DeclareMathOperator{\cas}{CAS}
\newcounter{propcounter}
\begin{document}
\maketitle

\begin{abstract}
Given a graph $G$, the family of all independent sets of size $k$ containing a fixed vertex $v$ is called a star with centre $v$, and is denoted by $\cI_G^k(v)$. Motivated by a generalisation of the Erd\H{o}s-Ko-Rado Theorem to the setting of independent sets in graphs, Hurlbert and Kamat conjectured that for every tree $T$ and every $k$, the maximum of $|\cI_T^k(v)|$ can always be attained by a leaf of $T$. While this conjecture turns out to be false in general, it is known to hold for specific families of trees like spiders and caterpillars. In this paper, we prove that this conjecture holds for a new family of trees, the perfect $r$-ary trees, by constructing injections from stars centred at arbitrary vertices to stars centred at leaves. We also show that the analogous property holds for every forest $\mathcal{T}$ that is the disjoint union of perfect trees with possibly varying sizes and arities, and determine the leaf that maximises $|\cI_{\mathcal{T}}^k(v)|$. 

\end{abstract}

\section{Introduction}
A family $\mathcal{F}$ of sets is called \textit{intersecting} if any $A,B\in\mathcal{F}$ satisfies $A\cap B\not=\varnothing$. The well-known Erd\H os-Ko-Rado Theorem~\cite{EKR}, published in 1961, states that if $n\geq2k$ and $\mathcal{F}$ is an intersecting family of subsets of $[n]$ of size $k$, then $|\mathcal{F}|\leq\binom{n-1}{k-1}$. Moreover, if $n>2k$, then equality holds if and only if $\mathcal{F}=\{A:i\in A, |A|=k\}$ for some $i\in[n]$. Since then, there have been many generalisations and variants of the Erd\H os-Ko-Rado Theorem in the literature. In this paper, we focus on a generalisation to the following setting of independent sets in graphs proposed by Holroyd, Spencer, and Talbot~\cite{HST,HT} in 2005. 

\begin{definition}
Let $G=(V(G), E(G))$ be a graph, let $v\in V(G)$, and let $k$ be an integer.  
\begin{itemize}
    \item A subset $I \subseteq V(G)$ is an \emph{independent set} in $G$ if $uw \notin E(G)$ for all $u, w \in I$. 
    \item The family of all independent sets of size $k$ in $G$ is denoted by $\mathcal{I}_{G}^{k}$.
    \item The family $\{A\in\mathcal{I}_{G}^{k}:v\in A\}$ of all independent sets of size $k$ in $G$ containing $v$ is denoted by $\mathcal{I}_{G}^{k}(v)$. Each family of the form $\cI_G^k(v)$ is called a \emph{star}, and $v$ is called its \emph{star centre}.
\end{itemize}
\end{definition}
Note that every star $\cI^k_G(v)$ is an intersecting family in $\cI^k_G$ by definition.

\begin{definition}
Let $G$ be a graph and let $k$ be an integer.
\begin{itemize}
    \item $G$ is \textit{$k$-EKR} if no intersecting family of sets in $\cI_G^k$ is larger than the largest star in $\cI_G^k$.
    \item $G$ is \textit{strictly $k$-EKR} if every intersecting family of sets in $\cI_G^k$ of maximum size is a star. 
\end{itemize}
\end{definition}
Observe that using this language, the Erd\H os-Ko-Rado Theorem can be restated as saying the empty graph on $n$ vertices is $k$-EKR if $n\geq 2k$, and strictly $k$-EKR if $n>2k$. 

Several families of graphs have been shown to be $k$-EKR for suitable values of $k$. These include powers of cycles~\cite{T}, powers of paths~\cite{HST}, and certain graphs with an isolated vertex~\cite{BH,HST,HK}. A major outstanding conjecture due to Holroyd and Talbot~\cite{HT} states that if $\mu(G)$ is the minimum size of a maximal independent set in $G$, then $G$ is $k$-EKR if $k\leq\mu(G)/2$, and strictly so if $k<\mu(G)/2$. Many positive results in this direction proceed by induction and use a shifting/compression proof technique involving the deletion of edges and vertices. The presence of an isolated vertex $x$ in $G$ is very useful as it is not affected by the deletion process, and due to the easy observation that for every $k$, the maximum of $|\cI_G^k(v)|$ can always be attained by $x$.



For a graph $G$ without an isolated vertex, finding the centre of a star of maximum size is no longer trivial, and it seems difficult in general to prove that $G$ is $k$-EKR without knowing which star has maximum size. Motivated by this, Hurlbert and Kamat~\cite{HK} proposed to study the intermediate problem of determining which vertex $v\in V(G)$ maximises $|\cI^k_G(v)|$. In particular, they considered the case when $G$ is a tree, and conjectured that there is always a maximum-sized star centred at a leaf. 
\begin{conjecture}[\cite{HK}]\label{conj:HK}
For any $k\ge1$ and any tree $T$, there exists a leaf $\ell$ of $T$ such that $|\mathcal{I}_{T}^{k}(v)|\le|\mathcal{I}_{T}^{k}(\ell)|$ for every $v\in V(T)$.
\end{conjecture}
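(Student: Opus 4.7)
The plan is to proceed by induction on $|V(T)|$, with the core technical step being the construction of an explicit injection from $\cI_T^k(v)$ into $\cI_T^k(\ell)$ for a suitable leaf $\ell$. Since the statement demands a single leaf that dominates all $v$ simultaneously, the candidate $\ell$ should depend only on $T$ and not on $v$; natural choices are an endpoint of a longest path in $T$, or a deepest leaf after rooting at the centroid.

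For the injection itself, I would take the unique path $P = v_0 v_1 \cdots v_d$ from $v = v_0$ to $\ell = v_d$ in $T$ and map $A \in \cI_T^k(v)$ to a set $\phi(A) \in \cI_T^k(\ell)$ by shifting the occupation pattern along $P$ one step toward $\ell$: remove $v_0$, insert $v_d$, and repair any independence conflicts that arise along $P$ or in its adjacent side-subtrees. Injectivity would follow by exhibiting a canonical inverse defined using the leftmost discrepancy between $A$ and $\phi(A)$ along $P$. The base cases of a path or a star are handled by direct computation, and for the inductive step one removes an appropriately chosen leaf and invokes the inductive hypothesis on the smaller tree, combining the resulting counts with the contribution of the removed leaf and its neighbour.

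The main obstacle lies in the repair step. When $P$ passes through a vertex of high degree, the shift can force the removal of an element of $A$ sitting in a side-subtree, and this loss must be compensated by inserting another element without destroying either independence or injectivity. The compensation is delicate precisely when the side-subtrees near $v$ are large compared with those near $\ell$, since the combinatorial budget of shifted configurations near $\ell$ may then be insufficient to absorb all configurations coming from $v$. Pinning down when such compensation is possible, and identifying a canonical rule that is simultaneously well-defined, independence-preserving, and invertible, is the crux of the problem; I expect that the injection strategy succeeds cleanly only when the tree enjoys enough symmetry to guarantee this balance automatically, which is exactly the setting of perfect $r$-ary trees pursued in the remainder of the paper.
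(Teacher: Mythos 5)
There is a fundamental problem before any details of your argument can be assessed: the statement you are trying to prove is false, and the paper says so explicitly. Conjecture~\ref{conj:HK} is stated only as a (disproved) conjecture of Hurlbert and Kamat; it holds for $k\le 4$, but Baber and Borg independently constructed counterexample trees for every $k\ge 5$, and Feghali, Johnson, and Thomas later built trees in which every maximum-sized star is centred at a vertex of arbitrarily large degree, arbitrarily far from any leaf. Consequently no injection $\cI_T^k(v)\to\cI_T^k(\ell)$ can exist for a general tree $T$, and no induction on $|V(T)|$ can close: the inductive step must break on the counterexample trees. Your base cases (paths, stars/spiders) and your intended conclusion for perfect $r$-ary trees are all genuinely HK, which is why the strategy feels plausible, but it cannot extend to all trees.

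To your credit, your own diagnosis of where the argument gets stuck is exactly right, and is essentially the reason the conjecture fails: when the side-subtrees hanging off the $v$--$\ell$ path near $v$ are large compared with those near $\ell$, the ``combinatorial budget'' near $\ell$ is insufficient to absorb the shifted configurations, and no canonical, independence-preserving, invertible repair rule exists. The counterexamples are built precisely to exploit this imbalance. Your proposed mechanism --- shift along $P_{v,\ell}$, repair conflicts in adjacent subtrees, prove injectivity via a canonical inverse --- is in fact the paper's method, but the paper deploys it only for perfect $r$-ary trees (Theorem~\ref{thm:main}), where the isomorphism between corresponding side-subtrees makes the repair (the CAS operation) well-defined and injective. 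The correct takeaway is that your plan proves the special case the paper actually claims, not the conjecture as stated; as a proof of Conjecture~\ref{conj:HK} it is unsalvageable because the target is false.
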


Hurlbert and Kamat proved this conjecture for $k\le4$~\cite{HK}. However, independent work of Baber~\cite{Baber} and Borg~\cite{Borg} produced counterexamples disproving Conjecture~\ref{conj:HK} for every $k\geq5$. Feghali, Johnson, and Thomas~\cite{FJT} produced further counterexamples showing that maximum-sized stars can be centred on a vertex with arbitrarily large degree and arbitrarily far away from any leaves in $T$, which disprove Conjecture~\ref{conj:HK} in a very strong sense. 

Even though Conjecture~\ref{conj:HK} is now disproved, it remains interesting to determine which trees satisfy the condition in Conjecture~\ref{conj:HK}. We use the following terminology introduced by Estrugo and Pastine in~\cite{EP}.
\begin{definition}
Let $T$ be a tree and let $k\geq1$ be an integer. 
\begin{itemize}
    \item $T$ is \textit{$k$-HK} if there exists a leaf $\ell$ of $T$ such that $|\mathcal{I}_{T}^{k}(v)|\le|\mathcal{I}_{T}^{k}(\ell)|$ for each $v\in V(T)$.
    \item $T$ is \textit{HK} if it is $k$-HK for every $k\geq1$.
\end{itemize}
\end{definition}
Two families of trees known to be HK are spiders~\cite{HK2} and caterpillars~\cite{EP}. We refer interested readers to a detailed survey~\cite{H} by Hurlbert of other results on graphs with EKR-type and/or HK-type properties.



The main result of this paper, Theorem~\ref{thm:main}, shows that the perfect $r$-ary trees are HK. For every integer $r\geq2$, a \textit{perfect $r$-ary tree} with $h$ \textit{levels} is the unique rooted plane tree $T$ that has exactly $r^{h-1}$ leaves, all at distance $h-1$ away from the root, and whose non-leaf vertices all have exactly $r$ children. The \textit{arity} of a perfect $r$-ary tree is $r$. A \textit{perfect tree} is a perfect $r$-ary tree for some $r\geq2$. The $r=2$ case of Theorem~\ref{thm:main} answers a question of Hurlbert~\cite{H}. 

\begin{theorem}\label{thm:main}
Let $T$ be a perfect $r$-ary tree. For any integer $k \geq 1$, any vertex $v \in V(T)$, and any leaf $\ell \in V(T)$, we have $|\mathcal{I}_{T}^{k}(v)| \leq |\mathcal{I}_{T}^{k}(\ell)|$.
\end{theorem}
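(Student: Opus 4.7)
I would prove Theorem~\ref{thm:main} by induction on the number of levels $h$ of $T$, carried out simultaneously with the forest version announced in the abstract. The base cases $h\in\{1,2\}$ are immediate. For the inductive step, write $T$ with root $\rho$, children $c_1,\ldots,c_r$, and subtrees $T_1,\ldots,T_r$; since all leaves of $T$ lie in the same orbit of the automorphism group of $T$, it suffices to prove $|\mathcal{I}_T^k(v)|\leq|\mathcal{I}_T^k(\ell)|$ for one fixed leaf $\ell$, which I place in $T_1$. Two cases remain: $v\in V(T_1)$ and $v=\rho$ (the case $v\in V(T_i)$ with $i\geq 2$ reduces to the first by the automorphism swapping $T_1$ and $T_i$).

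\textbf{Case 1: $v\in V(T_1)$.} Decomposing each $A\in\mathcal{I}_T^k(v)$ as $A_1\sqcup A_0$ with $A_1=A\cap V(T_1)$ and $A_0=A\setminus V(T_1)$, and splitting by whether $c_1\in A_1$ --- the only cross-boundary constraint is that $\rho\in A_0$ forces $c_1\notin A_1$ --- one obtains
\[
|\mathcal{I}_T^k(v)| \;=\; \sum_{j}|\mathcal{I}_{T_1}^{j}(v)|\,\gamma_{k-j} \;+\; \sum_{j}|\mathcal{I}_{T_1-c_1}^{j}(v)|\,\eta_{k-j-1},
\]
where $\gamma_j=|\mathcal{I}_{T_2\sqcup\cdots\sqcup T_r}^{j}|$ and $\eta_j=|\mathcal{I}_{\bigsqcup_{i\geq 2}(T_i-c_i)}^{j}|$ do not depend on $v$. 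The tree inductive hypothesis on $T_1$ gives $|\mathcal{I}_{T_1}^{j}(v)|\leq|\mathcal{I}_{T_1}^{j}(\ell)|$, and the forest inductive hypothesis applied to $T_1-c_1$ (a disjoint union of $r$ isomorphic perfect $r$-ary trees of height $h-2$, in which any leaf is optimal by symmetry) gives $|\mathcal{I}_{T_1-c_1}^{j}(v)|\leq|\mathcal{I}_{T_1-c_1}^{j}(\ell)|$. These combine term by term.

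\textbf{Case 2: $v=\rho$.} Here the above reduction fails since $\rho$ lies in no proper subtree, so I would instead construct an explicit injection $\phi\colon\mathcal{I}_T^k(\rho)\to\mathcal{I}_T^k(\ell)$ along the downward path $\rho=u_0,u_1,\ldots,u_{h-1}=\ell$. The naive rule $A\mapsto(A\setminus\{\rho\})\cup\{\ell\}$ works whenever the parent $p$ of $\ell$ is absent from $A$, and it always succeeds when $h\leq 3$ since then $p=c_1\notin A$. For the remaining configurations ($h\geq 4$ and $p\in A$), I would iterate the swap down the path: locate the deepest index $i$ with $u_i\in A$, remove $\rho$ and $u_i$, and add $\ell$ together with a sibling of $u_{i+1}$ chosen via the $r$-ary symmetry so as to preserve both size and independence. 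Injectivity follows by reading off, from the image, the depth at which the cascade acted, and inverting step by step.

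The main obstacle is Case~2: specifying the cascade rule cleanly and verifying that it produces a valid, injective map across all on-path configurations of $A$. Case~1 is mostly mechanical once the decomposition is set up, and the forest version it relies on is proved by the same simultaneous induction, after identifying which component of the forest should contain the optimal leaf.
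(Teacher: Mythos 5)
Your reduction to the case $v=\rho$ by induction (Case 1, with the simultaneous forest induction) is an attractive structural simplification that the paper does not use: the paper instead builds an explicit injection from $\mathcal{A}_{v,\ell}$ to $\mathcal{B}_{v,\ell}$ directly for arbitrary $v$, splitting by the parity of $d(v,\ell)$. If your Case~2 were correct, this would be a genuinely cleaner route. But Case~2 is exactly where all of the paper's work actually lives, and the one-shot cascade you describe there is not injective.

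A concrete counterexample at $r=2$, $h=4$, $k=2$: label the root-to-$\ell$ path $\rho=u_0,u_1,u_2,u_3=\ell$, and let $s$ be the sibling of $\ell$ (the other child of $u_2$). Take $A=\{\rho,u_2\}$; then $p(\ell)=u_2\in A$, the deepest on-path index in $A$ is $i=2$, and your rule outputs $(A\setminus\{\rho,u_2\})\cup\{\ell,s\}=\{\ell,s\}$. But $A'=\{\rho,s\}$ has $p(\ell)\notin A'$, so the naive rule already gives $(A'\setminus\{\rho\})\cup\{\ell\}=\{\ell,s\}$. So $\phi(A)=\phi(A')$, and contrary to your claim, the depth at which the cascade acted is not readable off the image. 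The same obstruction recurs at every level: once a swap adds a sibling vertex that a ``naive'' image could also contain, injectivity is lost.

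What is missing is that the conflict resolution must propagate: adding a vertex can collide with its children, and removing a vertex frees space that must be matched by a further transfer, recursively into subtrees off the path. The paper handles this with a pair of synchronized queues that mirror one another (the CAS operation of Definition~\ref{def:CAS} for the even case, and the analogous procedure in Section~\ref{sec:odd} for the odd case), and then proves injectivity by showing that the first point of disagreement between two inputs is preserved and visible in the outputs (Lemmas~\ref{lem:injcas}, \ref{lem:injeven}, \ref{lem:injodd}). Your sketch omits this propagation entirely and loses information about the intermediate path vertices. To salvage the approach you would still need a full cascading construction in the spirit of the paper's $\phi_{\textup{even}}$ and $\phi_{\textup{odd}}$ (specialized to $v=\rho$) together with a careful injectivity argument; the inductive reduction in Case~1 does not make that part materially easier.
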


Using similar ideas, we also show that if $\mathcal{T}$ is a disjoint union of perfect trees, then the maximum star size can similarly be attained by a star centred at a leaf. Moreover, we determine which leaf achieves the maximum. 
\begin{theorem}\label{thm:forest}
Let $\mathcal{T}$ be a forest with components $T_1,T_2,\ldots,T_n$, such that $T_i$ is a perfect tree for every $i\in[n]$. Let $T_1,T_2,\ldots,T_m$ be the perfect trees with the highest arity among $T_1,T_2,\ldots,T_n$, let $h_i$ be the number of levels in $T_i$ for every $i\in[m]$, and suppose $h_1\leq h_2\leq\cdots\leq h_m$. 

If $h_i$ is even for every $i\in[m]$, then let $\ell$ be any leaf in $T_m$. Otherwise, let $j\in[m]$ be minimal subject to $h_j$ being odd, and let $\ell$ be any leaf in $T_j$. Then, for every positive integer $k$ and any $v\in V(\mathcal{T})$, $|\cI_{\mathcal{T}}^k(v)|\leq|\cI_{\mathcal{T}}^k(\ell)|$.
\end{theorem}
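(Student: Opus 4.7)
The plan is to reduce the problem to a polynomial inequality involving independence polynomials and exploit the recursive structure of perfect trees. By Theorem~\ref{thm:main} applied to the component $T_i$ containing $v$, for any leaf $\ell_i$ of $T_i$ one has $|\mathcal{I}_{T_i}^{k_i}(v)| \leq |\mathcal{I}_{T_i}^{k_i}(\ell_i)|$ for every $k_i \geq 1$. Since
\[
|\mathcal{I}_{\mathcal{T}}^k(w)| = \sum_{k_1 + \cdots + k_n = k,\ k_i \geq 1} |\mathcal{I}_{T_i}^{k_i}(w)| \prod_{j \neq i} |\mathcal{I}_{T_j}^{k_j}|
\]
holds for any $w \in T_i$, applying the inequality termwise reduces the statement to identifying the leaf of $\mathcal{T}$ that maximises $|\mathcal{I}_{\mathcal{T}}^k(\cdot)|$.

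Writing $I_T(x) = \sum_k |\mathcal{I}_T^k|\, x^k$ and $I_T(x;v) = \sum_k |\mathcal{I}_T^k(v)|\, x^k$, the product structure of the forest's independence polynomial gives, for leaves $\ell \in T_a$ and $\ell' \in T_b$,
\[
|\mathcal{I}_{\mathcal{T}}^k(\ell)| - |\mathcal{I}_{\mathcal{T}}^k(\ell')| = [x^k]\,\bigl(I_{T_a}(x;\ell)\, I_{T_b}(x) - I_{T_b}(x;\ell')\, I_{T_a}(x)\bigr) \prod_{c \neq a, b} I_{T_c}(x).
\]
Since $\prod_c I_{T_c}(x)$ has non-negative coefficients, it suffices to show the bracketed polynomial has non-negative coefficients whenever $T_a$ is designated. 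Writing $A_T := I_T(x)$, $Q_T := I_{T - \ell}(x)$ and $C_T := I_{T - N[\ell]}(x)$ (all well defined on perfect trees independent of the leaf chosen, by symmetry), and using $I_T(x;\ell) = xC_T$ together with $A_T = Q_T + xC_T$, the bracketed polynomial simplifies to $\Delta_{a,b} := A_{T_a} Q_{T_b} - A_{T_b} Q_{T_a}$.

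When $T_a$ and $T_b$ have the \emph{same} arity $r$, the key observation is that $a_h := A_{P_{r,h}}$ and $q_h := Q_{P_{r,h}}$ both satisfy the linear recurrence $f_h = a_{h-1}^{r-1} f_{h-1} + x a_{h-2}^{r^2 - 1} f_{h-2}$: for $a_h$ this is just the standard root decomposition $a_h = a_{h-1}^r + x a_{h-2}^{r^2}$, while for $q_h$ it comes from decomposing $P_{r,h} - \ell$ at the root and tracking the modified subtree containing the deleted leaf. Defining $W_h := a_h q_{h-1} - a_{h-1} q_h$, the shared recurrence yields the Wronskian-like identity $W_h = -x a_{h-2}^{r^2-1} W_{h-1}$, hence the closed form $W_h = (-1)^{h-1} x^h \prod_{j=0}^{h-2} a_j^{r^2-1}$. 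More generally, $E_{h, h+k} := q_h a_{h+k} - q_{h+k} a_h$ satisfies the same recurrence in $k$ with $E_{h,h} = 0$ and $E_{h,h+1} = W_{h+1}$, so induction on $k$ shows that every coefficient of $E_{h, h+k}$ has sign $(-1)^h$. Since $\Delta_{a,b} = -E_{h_a, h_b}$, checking cases recovers exactly the designated ordering among same-arity trees: smallest odd height wins, or largest even height if no odd height is present.

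The hardest step is the cross-arity case, where $T_a$ has strictly higher arity than $T_b$ and we still need $\Delta_{a,b} \geq 0$; the Wronskian argument above does not transport directly because the recurrence coefficients $a_{h-1}^{r-1}$ and $x a_{h-2}^{r^2-1}$ depend on $r$. I would handle this by a direct induction on $h_a + h_b$, exploiting the factorisation $C_{P_{r,h}}(x) = (1+x)^{r-1} I_R(x)$, where $R$ is the graph obtained from $P_{r,h}$ by deleting the subtree rooted at the removed leaf's parent; the extra factor $(1+x)^{r_a - r_b}$ contributed by the higher arity of $T_a$ should provide enough slack to absorb the comparison with $T_b$, with small-height base cases verified by direct computation. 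Combining the same-arity and cross-arity inequalities then gives the theorem.
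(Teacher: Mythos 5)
Your overall strategy is a genuine alternative to the paper's: where the paper builds explicit injections between star families (Lemma~\ref{lemma:differentarity} and Lemma~\ref{lemma:samearity}), you translate the leaf-versus-leaf comparison into a sign condition on the polynomial $\Delta_{a,b} = A_{T_a} Q_{T_b} - A_{T_b} Q_{T_a}$. The reduction is correct: $I_T(x;\ell) = x C_T$ and $A_T = Q_T + x C_T$ indeed collapse the bracketed expression to $\Delta_{a,b}$, and the same-arity analysis via the shared recurrence $f_h = a_{h-1}^{r-1} f_{h-1} + x a_{h-2}^{r^2-1} f_{h-2}$ and the Wronskian $W_h = a_h q_{h-1} - a_{h-1} q_h = -x a_{h-2}^{r^2-1} W_{h-1}$ is a clean, verifiable argument. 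The sign $(-1)^h$ for $E_{h,h+k}$ ($k\geq1$) checks out, and tracing the cases does reproduce exactly the parity-dependent choice of leaf in the theorem (smallest odd height wins; otherwise the largest even height). This buys you an explicit formula for the difference rather than a combinatorial matching, which is aesthetically nicer and potentially more quantitative.

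However, the cross-arity case is a genuine gap, and you acknowledge it yourself. The Wronskian technique relies on $a_h$ and $q_h$ satisfying a common two-term recurrence, and the coefficients of that recurrence carry the arity $r$; once $T_a$ and $T_b$ have different arities, there is no shared recurrence to exploit, and the argument simply does not extend. Your proposed fix---induct on $h_a + h_b$ and lean on the factorisation $C_{P_{r,h}}(x) = (1+x)^{r-1} I_R(x)$---is a plausible direction, and the factorisation itself is correct, but you never set up the induction, identify what the inductive hypothesis should assert, or verify the base cases. This is precisely the step the paper handles directly with a short, height-independent injection (Lemma~\ref{lemma:differentarity}): that injection touches only the first two levels of each tree and resolves conflicts using the $r_2 - r_1 \geq 1$ extra children available at $p(\ell_2)$, which is why no induction on height is needed there. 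Without this piece, the chain of inequalities from an arbitrary vertex $v$ to the designated leaf $\ell$ is broken whenever the component of $v$ does not have the maximum arity, so the proof of Theorem~\ref{thm:forest} as you have written it is incomplete.

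One more thing worth flagging: the single-vertex tree ($h=1$) is a perfect $r$-ary tree for every $r\geq2$, so its ``arity'' is not well defined. Your polynomial framework will happily compute with $h=1$ as a boundary value ($a_0=q_0=1$, $a_1 = 1+x$, $q_1 = 1$), but if you do complete the cross-arity induction you should state explicitly how single-vertex components are classified, since a naive reading of ``higher arity wins'' fails against a height-1 tree (a direct computation with $T_b$ a single vertex shows $\Delta_{a,b}<0$).
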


While we have shown that perfect $r$-ary trees are HK, it remains open whether they are $k$-EKR for suitable choices of $k$, so we pose the following question.
\begin{question}
For which values of $k,r,h$ is the perfect $r$-ary tree with $h$ levels $k$-EKR?
\end{question}
 
\section{Perfect $r$-ary trees}\label{sec:main}
Let $T$ be a perfect $r$-ary tree. For every non-leaf vertex $v\in V(T)$, denote its $r$ children from left to right by $c_1(v),c_2(v),\ldots,c_r(v)$.  For every non-root vertex $v\in V(T)$, denote its parent by $p(v)$. Due to the symmetries in the perfect $r$-ary tree $T$, the size of the star $|\mathcal{I}_{T}^{k}(v)|$ depends only on the level of $v$, so in particular $|\mathcal{I}_{T}^{k}(\ell)|$ is independent of the choice of a leaf $\ell\in V(T)$. Therefore, from now on we will assume without loss of generality that $\ell$ is the leftmost leaf of $T$ and $v$ is an ancestor of $\ell$. Denote the path from $v$ to $\ell$ by $P_{v,\ell}$.

To prove Theorem \ref{thm:main}, it suffices to find an injection from $\cI_{T}^k(v)$ to $\cI_T^k(\ell)$. We define the following families of independent sets of size $k$:
\begin{itemize}
  \item $\mathcal{A}_{v,\ell} = \{I \in \cI^k_T : v \in I,\ell\notin I\}$
  \item $\mathcal{B}_{v,\ell} = \{I \in \cI^k_T : v \notin I,\ell \in I\}$
  \item $\mathcal{C}_{v,\ell} = \{I \in \cI^k_T : v \in I,\ell \in I\}$
\end{itemize}
Since $|\mathcal{I}_{T}^{k}(v)| = |\mathcal{A}_{v,\ell}| + |\mathcal{C}_{v,\ell}|$ and $|\mathcal{I}_{T}^{k}(\ell)| = |\mathcal{B}_{v,\ell}| + |\mathcal{C}_{v,\ell}|$, Theorem~\ref{thm:main} is equivalent to showing $|\mathcal{A}_{v,\ell}| \leq |\mathcal{B}_{v,\ell}|$. Therefore, it suffices to construct an injection $\phi: \mathcal{A}_{v,\ell} \to \mathcal{B}_{v,\ell}$. 

For every independent set $I\in\mathcal{A}_{v,\ell}$ containing $v$ but not $\ell$, the corresponding independent set $\phi(I)\in\mathcal{B}_{v,\ell}$ containing $\ell$ but not $v$ will be constructed by an iterative process. The first step of this process is to remove $v$ from $I$ and replace it with $\ell$. Of course, this may create a conflict if the parent of $\ell$ was originally in $I$. In fact, the goal of each subsequent iteration of the process is to check for and resolve a conflict created by the vertex movement in a previous iteration. We will show that this process must terminate and output the desired independent set $\phi(I)$, with the resulting mapping $I\mapsto\phi(I)$ being injective. This will be done in detail in Section~\ref{sec:even} and Section~\ref{sec:odd} depending on the parity of the distance $d(v,\ell)$. The case when $d(v,\ell)$ is even is more involved. To improve readability, we isolate part of the iterative process used there and present it as the CAS operation in Section~\ref{sec:cas}.

\subsection{The Conditional Alternating Swap (CAS)}\label{sec:cas}
When $d(v,\ell)$ is even, the key part of the iterative process we use in Section~\ref{sec:even} to construct the injection $\phi$ involves successively swapping vertices on the path $P_{v,\ell}$. To resolve conflicts that may arise in the rest of the tree $T$, we use the following CAS operation. See Figure~\ref{fig:1} for an example application of CAS when $r=2$.  
\begin{definition}[Conditional Alternating Swap]\label{def:CAS}
Let $T_d$ and $T_u$ be two disjoint perfect $r$-ary trees with roots $d$ and $u$, respectively. Suppose also that $T_d$ has an odd number of levels, and $T_u$ has more levels than $T_d$.

Let $I\subseteq V(T_d)\cup V(T_u)$ be an independent set containing $u$ but not $d$. Define the \textit{Conditional Alternating Swap (CAS)} operation that outputs an independent set $\cas(I,T_d,T_u)\subseteq V(T_d)\cup V(T_u)$ of the same size containing $d$ but not $u$ as follows.

\medskip

\noindent\textbf{Initialisation:} Create two queues $Q_1^s$ and $Q_1^t$, containing just $u$ and just $d$, respectively. Let $I_1=I$ and $V_1=\{u,d\}$.

\medskip

\noindent\textbf{Iteration Step:}
At the beginning of the $i$-th iteration, we have some $I_i,V_i,Q_i^s,Q_i^t\subseteq V(T_d)\cup V(T_u)$ such that the following conditions hold. 
\stepcounter{propcounter}
\begin{enumerate}[label = \textbf{\Alph{propcounter}\arabic{enumi}}]
    \item\label{cas:1} $|Q_i^s|=|Q_i^t|$ and $|I_{i}|=|I|$.
    \item\label{cas:2} $V_i=\cup_{j=1}^i(Q_j^s\cup Q_j^t)$ is the set of all vertices that have been in either queue. $T_d[V_i]$ is a subtree of $T_d$ and $T_u[V_i]$ is a subtree of $T_u$. Moreover, every vertex in $Q_i^s\cup Q_i^t$ is a leaf in either $T_d[V_i]$ or $T_u[V_i]$, and has not been modified by this process yet.
    \item\label{cas:3} If $v\in I_i$ and $p(v)\in I_i$, then $v\in Q_i^s$.
    \item\label{cas:4} $I_i\cap(Q_i^t\cup p(Q_i^t))=\varnothing$.
    \item\label{cas:5} The distance between $d$ and every vertex in $Q_i^s\cap V(T_d)$ is odd, while the distance between $d$ and every vertex in $Q_i^t\cap V(T_d)$ is even.
    \item\label{cas:6} Suppose $s_i\in Q_i^s$ and $t_i\in Q_i^t$ are the oldest elements in the two queues, then one of them is in $T_d$ and the other is in $T_u$. If $s\in V(T_d)$ and $t\in V(T_u)$, then the distance between $s$ and $d$ is the same as the distance between $t$ and $u$, and vice versa. 
\end{enumerate} 
Note that~\ref{cas:1}--\ref{cas:6} hold after initialisation, so the first iteration can be started.
\begin{enumerate}
    \item \textbf{Dequeue:} If $Q_i^s=\varnothing$, terminate, and return $\cas(I,T_d,T_u)=I_i$, noting that~\ref{cas:3} implies that $\cas(I,T_d,T_u)$ is an independent set. Otherwise, by~\ref{cas:1}, $Q_i^t\not=\varnothing$ as well, let the oldest element in $Q_i^s$ and $Q_i^t$ be $s_i$ and $t_i$, respectively.
    \item \textbf{Conflict Check:} If $s_i\notin I_i$, let $I_{i+1}=I_i$, $V_{i+1}=V_i$, $Q_{i+1}^s=Q_i^s\setminus\{s_i\}$, and $Q_{i+1}^t=Q_i^t\setminus\{t_i\}$. Note that~\ref{cas:1}--\ref{cas:6} still hold with $i+1$ in place of $i$, so we can advance to the $(i+1)$-th iteration. Otherwise, $s_i\in I_i$, and the current iteration continues.
    \item \textbf{Swap Action:} Let $I_{i+1}=(I_i\setminus \{s_i\}) \cup \{t_i\}$, note that $t_i\not\in I_i$ by~\ref{cas:4}, so $|I_{i+1}|=|I_i|=|I|$ by~\ref{cas:1}. By~\ref{cas:6}, one of $s_i$ and $t_i$ is in $T_d$, and the other is in $T_u$. We have three cases.
    \begin{itemize}
        \item If $s_i\in V(T_d)$, then the distance between $s_i$ and $d$ is odd by~\ref{cas:5}. Since $T_d$ has an odd number of levels, $s_i$ is not a leaf in $T_d$. By~\ref{cas:6} and since $T_u$ has more levels than $T_d$, $t_i$ is also not a leaf in $T_u$. Moreover, using~\ref{cas:2} and that $s_i\in Q_i^s$, we have that for every $j\in[r]$, $c_j(s_i)\not\in V_i$. Hence, neither $s_i$ nor any of its children has been modified by this process at the start of the $i$-th iteration. Since $I$ is an independent set and $s_i\in I_i$, we have $s_i\in I$ and so none of its children is in $I$ or $I_i$. Remove $s_i$ from $Q_i^s$ and $t_i$ from $Q_i^t$. For every $j\in[r]$ in order, add $c_j(s_i)$ to $Q_i^t$ and $c_j(t_i)$ to $Q_i^s$. Let the resulting updated queues be $Q_{i+1}^s$ and $Q_{i+1}^t$. 
        \item If $t_i\in V(T_d)$ is not a leaf in $T_d$, then $s_i\in V(T_u)$ is also not a leaf in $T_u$ by~\ref{cas:6}. Similar to above, remove $s_i$ from $Q_i^s$ and $t_i$ from $Q_i^t$. For every $j\in[r]$ in order, add $c_j(s_i)$ to $Q_i^t$ and $c_j(t_i)$ to $Q_i^s$. Let the resulting updated queues be $Q_{i+1}^s$ and $Q_{i+1}^t$.
        \item If $t_i\in V(T_d)$ is a leaf in $T_d$, then let $Q_{i+1}^s=Q_i^s\setminus\{s_i\}$ and $Q_{i+1}^t=Q_i^t\setminus\{t_i\}$.
    \end{itemize} 
This completes the $i$-th iteration, with all of~\ref{cas:1}--\ref{cas:6} maintained, so we can advance to the $(i+1)$-th iteration. 
\end{enumerate}

\noindent\textbf{Termination:} We show that this process must terminate and output an independent set $\cas(I,T_d,T_u)$ containing $d$ but not $u$. Indeed, $T_d$ and $T_u$ contain only finitely many vertices. By~\ref{cas:2} and the iterating process, each vertex can only be added to the two queues once, and each iteration removes an existing vertex from each queue. Thus, $Q_i^s$ eventually becomes empty. At termination, the output $\cas(I,T_d,T_u)$ is indeed an independent set by~\ref{cas:3}. It contains $d$ but not $u$ from the 1st iteration.
\end{definition}

\begin{figure}[h]
    \centering
    \input{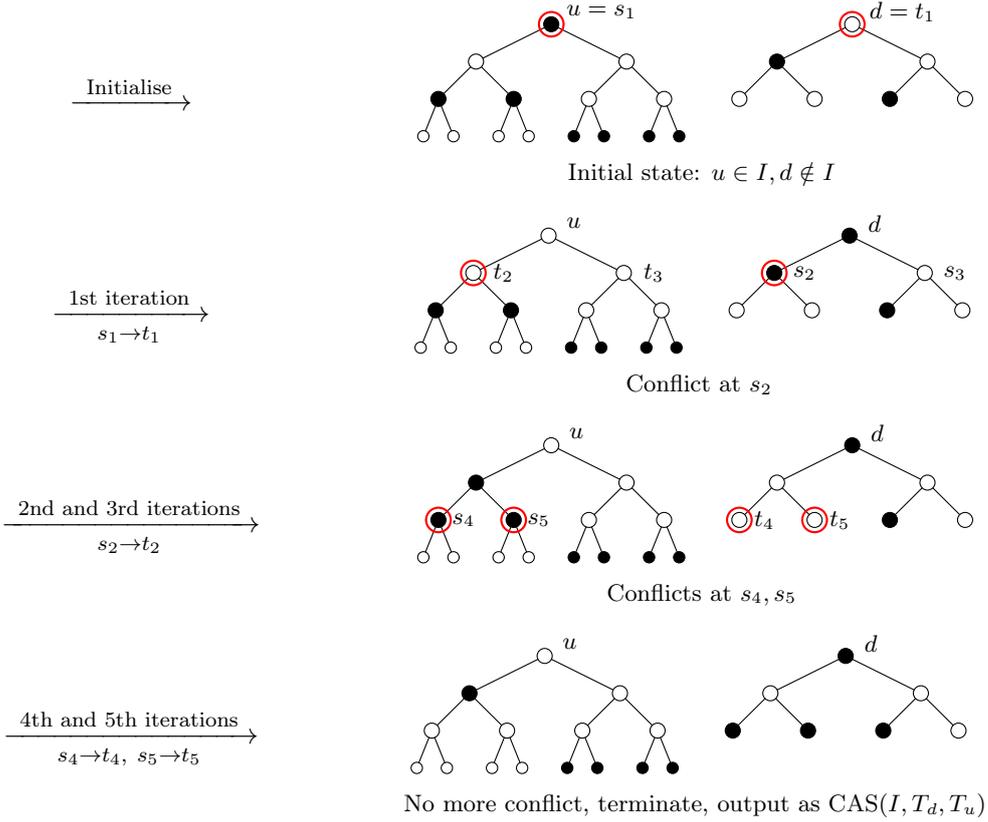}
    \caption{Example run of CAS with $r=2$. Vertices in $I_i$ are black, and vertices to be modified in each step are circled in red.}
    \label{fig:1}
\end{figure}

The following key lemma shows that the CAS operation is injective.
\begin{lemma}[Injectivity of CAS]\label{lem:injcas}
For $T_d$ and $T_u$ satisfying the assumptions in Definition~\ref{def:CAS}, the map $I\mapsto\cas(I,T_d,T_u)$ is injective. 
\end{lemma}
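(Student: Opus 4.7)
My plan is to prove injectivity by constructing an explicit left inverse $\Psi$ that, given $I'=\cas(I,T_d,T_u)$, recovers $I$. The map $\Psi$ will run essentially the same BFS over pairs $(s_i,t_i)$ as CAS: it starts from the pair $(u,d)$, expands children as $(c_j(t_i),c_j(s_i))$, and dispatches on the same three structural cases in the Swap Action. The only differences are that its working set begins as $J_1=I'$ (rather than $I$), and at each iteration it checks whether $t_i\in J_i$ (rather than $s_i\in J_i$); when the check succeeds, $\Psi$ performs the reverse swap $J_{i+1}=(J_i\setminus\{t_i\})\cup\{s_i\}$. Since the three cases of the Swap Action depend only on where $s_i,t_i$ lie in $T_d$ and $T_u$ (not on any independent set), the evolution of $\Psi$'s queues is determined entirely by its sequence of swap decisions.

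I will then prove by induction on $i$ that CAS run on $I$ and $\Psi$ run on $I'$ use identical queues $(Q_i^s,Q_i^t)$ and agree on whether to swap at every iteration. The core invariant is
\[
s_i \in I \iff t_i \in I' \iff t_i \in J_i
\]
for every pair $(s_i,t_i)$ in the forward explored set. For the first equivalence, if CAS swaps at iteration $i$ then $s_i\in I$ and $t_i$ is inserted into $I_{i+1}$; by condition~\ref{cas:2} the vertex $t_i$ is never revisited, so $t_i\in I'$. Conversely, each target in the explored set is either $t_1=d\notin I$ or the child of some earlier source $s_{i'}\in I$ (since new targets are added only as children of a source that triggered a swap), and in the latter case $t_i\notin I$ by independence; therefore $t_i\in I'$ can only arise from an actual swap, forcing $s_i\in I$. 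The second equivalence uses~\ref{cas:2} again: each vertex belongs to at most one pair, so $\Psi$ cannot have modified $t_i$ before iteration $i$, and $t_i\in J_i\iff t_i\in I'$. Thus CAS and $\Psi$ agree at every step, and the net effect of $\Psi$ reverses exactly the swaps CAS performed, giving $\Psi(I')=I$.

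The main obstacle will be the combinatorial bookkeeping around unique visits and alternating sides: I must verify that each vertex enters at most one of the queues $Q_i^s, Q_i^t$ across all iterations (so the ``never modified yet'' claims hold for both CAS and $\Psi$), and check that cases 1--3 of the Swap Action fire in the same way for the two procedures despite the swap trigger being phrased in terms of $s_i$ for CAS and $t_i$ for $\Psi$. All of these facts follow from conditions~\ref{cas:2}, \ref{cas:5}, and~\ref{cas:6}, whose single-visit and matched-distance guarantees make the forward and reverse queues evolve in lockstep once the swap decisions are shown to coincide.
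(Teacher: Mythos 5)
Your proposal is correct, and it takes a genuinely different route from the paper. The paper argues injectivity directly by comparing two runs of CAS on distinct inputs $I$ and $I'$: either the two runs first diverge at some iteration $i$ (in which case exactly one output contains $t_i$, which is never modified afterward), or they agree at every iteration (in which case $I$ and $I'$ agree on all visited vertices, hence disagree at some unvisited vertex $x$, and the outputs still disagree at $x$). You instead construct an explicit left inverse $\Psi$ and prove $\Psi\circ\cas=\mathrm{id}$ by a lockstep induction. The crux of your argument is the equivalence $s_i\in I\iff t_i\in\cas(I,T_d,T_u)$, which you justify correctly: a target $t_i$ (other than $d$, which is always swapped in) is a child of an earlier source that triggered a swap and therefore lies in $I$, so $t_i\notin I$ by independence; combined with~\ref{cas:2} and~\ref{cas:4}, the only way $t_i$ can end up in the output is via a swap at iteration $i$, which happens iff $s_i\in I$. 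Since the three structural cases in the Swap Action do not depend on the working set, the queue evolution is determined by the sequence of swap decisions, and your induction closes. What your approach buys is an explicit, computable inverse on the image of CAS, which makes the reversibility of the construction transparent and would let one certify membership in the image algorithmically; what the paper's approach buys is brevity, since it never needs to define a second procedure or set up the lockstep correspondence. Both are complete and correct.
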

\begin{proof}
Let $I,I'\subseteq V(T_d)\cup V(T_u)$ be distinct independent sets containing $u$ but not $d$. We want to show that $\cas(I,T_d,T_u)\not=\cas(I',T_d,T_u)$. This is immediate if $|I|\not=|I'|$ as the CAS operation preserves size, so suppose $|I|=|I'|$. 

For each $i\geq 1$, let $I_i$ and $I_i'$ be the sets obtained at the beginning of the $i$-th iteration of the CAS process applied to $I$ and $I'$, respectively. 

Observe that if the CAS process applied to $I$ and the CAS process applied to $I'$ agree in their first $i-1$ iterations, then at the beginning of the $i$-th iteration, both processes have the same queues $Q_i^s$, $Q_i^t$, and so the same $s_i$ and $t_i$. If $Q_i^s$ is empty, then they both terminate, and we deal with this case below. If not, then the outcomes of the $i$-th iterations for $I$ and $I'$ entirely depend on whether $s_i\in I_i$ and whether $s_i\in I_i'$. By~\ref{cas:2}, $s_i$ has not been modified, so this is equivalent to whether $s_i\in I$ and whether $s_i\in I'$. If they agree, then the $i$-th iteration also agree, while if not, then as $s_i$ and $t_i$ are not modified in future iterations, the final outputs $\cas(I,T_d,T_u)$ and $\cas(I',T_d,T_u)$ differ as one of them contains $t_i$ and the other one doesn't. Therefore, the outputs differ if they disagree in some iteration of the process.

If both processes agree in each iteration up until termination, then they must terminate at the same time, say at the beginning of the $i$-th iteration. From the paragraph above, $I$ and $I'$ agree on all vertices that had been in $Q_j^s$ for some $j\in[i]$. They also agree on all vertices that have been in $Q_j^t$ for some $j\in[i]$, as by~\ref{cas:2} and~\ref{cas:4}, these vertices are not in $I$ and not in $I'$. Hence, $I$ and $I'$ agree on $V_i$, the set of vertices that have been in either queue. However, $I$ and $I'$ are distinct, so they disagree on some $x\in(V(T_d)\cup V(T_u))\setminus V_i$. Since only vertices in $V_i$ are modified by this process, $\cas(I,T_d,T_u)\not=\cas(I',T_d,T_u)$ as they also disagree on $x$.
\end{proof}

\subsection{Case 1: $d(v,\ell)$ is even}\label{sec:even}
When $d(v, \ell)$ is even, the iterative process defining the injection $\phi_{\textup{even}}:\mathcal{A}_{v,\ell}\to\mathcal{B}_{v,\ell}$ 
proceeds by a sequence of vertex swaps on the path $P_{v,\ell}$, combined with performing the CAS operation on the relevant adjacent subtrees. More precisely, let $I\in\mathcal{A}_{v,\ell}$, we construct $\phi_{\textup{even}}(I)\in\mathcal{B}_{v,\ell}$ of the same size as follows. 

\medskip

\noindent\textbf{Initialisation:} Set $I_1=(I\setminus\{v\})\cup\{\ell\}$, $d_1=p(\ell)$, and $u_1=c_1(v)$. 

\medskip

\noindent\textbf{Iteration Step:} For every integer $i\geq 1$, we say we are at the beginning of the $i$-th iteration if there exists $I_i$ and $d_1,d_2,\ldots,d_i,u_1,u_2,\ldots,u_i\in P_{v,\ell}$ such that the following conditions hold. In what follows, for every $v\in V(T)$, let $T_v$ be the subtree of $T$ induced by $v$ and all of its descendants in $T$. 
\stepcounter{propcounter}
\begin{enumerate}[label = \textbf{\Alph{propcounter}\arabic{enumi}}]
    \item\label{even:1} For every $j\in[i-1]$, $d_{j+1}=p(p(d_j))$ and $u_{j+1}=c_1(c_1(u_j))$. Moreover, either $d_i=u_i$ or $d_i$ is a descendant of $u_i$. 
    \item\label{even:2} Neither $u_i$ nor $p(u_i)$ is in $I_i$. 
    \item\label{even:3} Only vertices in $\{d_1,d_2,\ldots,d_{i-1},u_1,u_2,\ldots,u_{i-1}\}\cup(\cup_{j=1}^{i-1}\cup_{m=2}^r(V(T_{c_m(u_j)})\cup V(T_{c_m(d_j)})))$ have been modified, and every vertex is modified at most once.
    \item\label{even:4} $I_i$ is an independent set if $d_i\not\in I_i$. If $d_i\in I_i$, then the only edge in $I_i$ is $d_ic_1(d_i)$.
\end{enumerate}
Note that after initialisation,~\ref{even:1}--\ref{even:4} hold for $i=1$, so we can begin the first iteration. 

\begin{enumerate}
    \item\textbf{Primary Check:} If $d_i\not\in I_i$, terminate, and let $\phi_{\textup{even}}(I)=I_i$. Note that $\phi_{\textup{even}}(I)=I_i$ is an independent set by~\ref{even:4}. Otherwise, $d_i\in I_i$, continue. Observe that if $d_i\in I_i$, then by~\ref{even:2}, $d_i\not=u_i$. Also, by~\ref{even:1} and~\ref{even:3}, $d_i\in I_i$ implies that for every $2\leq m\leq r$, $c_m(d_i)$ has not been modified, and is in neither $I$ nor $I_i$.
    \item\textbf{Primary Shift:} Let $I_i'=(I_i\setminus\{d_i\})\cup\{u_i\}$. By~\ref{even:2} and~\ref{even:4}, the only possible edges in $I_i'$ are between $u_i$ and its $r$ children. 
    \item\textbf{Conditional Alternating Swap:} For every $2\leq m\leq r$, if $c_m(u_i)\not\in I_i'$, do nothing. Otherwise, we need to resolve the conflict at $c_m(u_i)$. Let $J_{i,m}=I_i'\cap(V(T_{c_m(d_i)})\cup V(T_{c_m(u_i)}))$. Using $d(v,\ell)$ is even,~\ref{even:1},~\ref{even:3}, and $c_m(d_i)\not\in I_i'$, the assumptions in Definition~\ref{def:CAS} are satisfied, so we can let $J_{i,m}'=\cas(J_{i,m},T_{c_m(d_i)},T_{c_m(u_i)})$, which is an independent set of the same size as $J_{i,m}$ that contains $c_m(d_i)$ but not $c_m(u_i)$. Let $I_i''=(I_i'\setminus(\cup_{m=2}^rJ_{i,m}))\cup(\cup_{m=2}^rJ_{i,m}')$. Then $|I_i''|=|I_i'|=|I|$, and the only possible edge in $I_i''$ is $u_ic_1(u_i)$.
    \item\textbf{Secondary Check:} If $c_1(u_i)\not\in I_i''$, then terminate, and let $\phi_{\textup{even}}(I)=I_i''$, which is an independent set. Otherwise, $c_1(u_i)\in I_i''$, continue, noting that $c_1(u_i)\not=p(d_i)$ as $d_i\in I_i$ implies that $p(d_i)\not\in I_i''$ by~\ref{even:3}.
    \item\textbf{Secondary Shift:} Let $I_{i+1}=(I_i''\setminus\{c_1(u_i)\})\cup\{p(d_i)\}$. Let $d_{i+1}=p(p(d_i))$ and $u_{i+1}=c_1(c_1(u_i))$, noting that $d_{i+1}$ is either equal to or a descendant of $u_{i+1}$ as $c_1(u_i)\not=p(d_i)$. Since $c_1(u_i)\in I_i''$, $u_{i+1}=c_1(c_1(u_i))$ is not in $I_{i+1}$ by~\ref{even:3}. It follows that~\ref{even:1}--\ref{even:4} hold with $i+1$ in place of $i$, so we can advance to the $(i+1)$-th iteration.
\end{enumerate}

\noindent\textbf{Termination:} Note that each iteration, if not terminated, reduces the distance between $u_i$ and $d_i$ by 4, so the process must terminate. From above, the output $\phi_{\textup{even}}(I)$ is in $\mathcal{B}_{v,\ell}$. 

\medskip

Similar to Lemma~\ref{lem:injcas}, we now show that $\phi_{\textup{even}}$ is injective.
\begin{lemma}\label{lem:injeven}
If $d(v,\ell)$ is even, then $\phi_{\textup{even}}:\mathcal{A}_{v,\ell}\to\mathcal{B}_{v,\ell}$ is injective. 
\end{lemma}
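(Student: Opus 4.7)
The plan is to mirror the proof of Lemma~\ref{lem:injcas}: given distinct $I, I' \in \mathcal{A}_{v,\ell}$ (we may assume $|I|=|I'|$ since the process preserves size), run both processes on $I$ and $I'$ in parallel and locate the earliest iteration in which they take a different action. By~\ref{even:1} both processes traverse the same sequence of vertices $d_i, u_i \in P_{v,\ell}$, and by~\ref{even:3} the vertices whose memberships determine the actions of iteration $i$---namely $d_i$, $c_1(u_i)$, each $c_m(u_i)$ for $m \geq 2$, and the contents of $V(T_{c_m(d_i)})\cup V(T_{c_m(u_i)})$---have not been modified at the start of that iteration. Hence the Primary Check, each CAS invocation test together with its input $J_{i,m}$, and the Secondary Check at iteration $i$ depend only on $I$ (respectively $I'$) restricted to these vertices.

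Let $i^*$ be the smallest iteration at which the two processes take a different action, with $i^* = \infty$ if none exists. When $i^* < \infty$, the CAS sub-case is the cleanest: if invocation decisions differ at $(i^*, m)$, then $c_m(d_{i^*})$ appears in the CAS output of exactly one process, while being absent from both $I$ and $I'$ by independence from the agreed $d_{i^*}$; if both invoke CAS but the inputs $J_{i^*, m}$ differ, Lemma~\ref{lem:injcas} gives distinct CAS outputs on $T_{c_m(d_{i^*})}\cup T_{c_m(u_{i^*})}$, and by~\ref{even:3} this subtree is never revisited. If instead a Primary or Secondary Check at $i^*$ diverges, one process terminates while the other performs an extra sequence of path shifts; chasing this cascade along $P_{v,\ell}$ produces a witness vertex---typically $u_{i^*}$ or $p(d_{i^*})$ when the continuing process terminates soon after $i^*$, and the ``reflected'' path vertex $d_{i^*}$ or $c_1(u_{i^*})$ when the cascade reaches the symmetric iteration $d(v,\ell)/2 + 1 - i^*$ and flips the indicator back.

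If $i^* = \infty$, both processes perform identical actions and hence identical modifications. I will then verify that $I$ and $I'$ agree on the entire modification region: on $d_i$ and $c_1(u_i)$ because the corresponding Checks agreed, on CAS-invoked subtrees because the inputs $J_{i,m}$ agreed, and on $u_i$ and $p(d_i)$ because independence from agreed neighbours on $P_{v,\ell}$ (namely $v$ or $c_1(u_{i-1})$ for $u_i$, and $d_i$ for $p(d_i)$) forces these vertices out of both $I$ and $I'$. Therefore the symmetric difference $I \triangle I'$ lies entirely in the unmodified region, and is copied verbatim into $\phi_{\textup{even}}(I)\triangle\phi_{\textup{even}}(I')$. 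The main obstacle is the Primary/Secondary Check divergence sub-case: because a single swap performed at iteration $i^*$ can be undone by the ``mirror'' swap at iteration $d(v,\ell)/2 + 1 - i^*$, one has to perform a careful case split on whether this mirror iteration is reached by the continuing process (or is cut off earlier by a subsequent termination) in order to select the correct witness vertex, exploiting the reflective structure of the path shifts together with~\ref{even:3} to rule out any further cancellation.
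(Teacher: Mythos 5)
Your overall strategy matches the paper's: run the processes on $I$ and $I'$ in parallel, locate the first iteration $i^*$ at which they diverge, and exhibit a witness vertex that survives to the final outputs (or, if they never diverge, argue that the two inputs must disagree on some never-modified vertex). Your handling of the no-divergence case and of the two CAS sub-cases (disagreement on some $c_m(u_{i^*})$, or agreeing CAS decisions with differing inputs $J_{i^*,m}$, settled by Lemma~\ref{lem:injcas}) is correct and coincides with the paper's argument.

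The proof is incomplete, however, because of a misconception in the Primary/Secondary Check sub-case. You flag the possibility that the swap at $u_{i^*}$ (or $p(d_{i^*})$) is later ``undone'' by a mirror iteration near $d(v,\ell)/2+1-i^*$, call this the ``main obstacle,'' and propose a case split that you do not carry out. In fact no such case split is needed: property~\ref{even:3} states that every vertex is modified at most once over the entire run, so once $u_{i^*}$ is added to exactly one of the two runs at the Primary Shift of iteration $i^*$ (and likewise for $p(d_{i^*})$ at the Secondary Shift), it remains a witness in the final outputs and the argument is over. The mirror iteration you worry about never materialises in any case: the constraint in~\ref{even:1} that $d_j$ be a weak descendant of $u_j$ forces termination before $d_j$ could climb to the level of $u_{i^*}$, and in the single borderline configuration where $d_j = u_j$,~\ref{even:2} forces the Primary Check to fail for both runs simultaneously, so a first divergence cannot occur there. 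The paper closes this sub-case in one line by citing~\ref{even:3}; with that replacement your sketch becomes a correct rendering of the paper's proof.
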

\begin{proof}
Let $I,\overline{I}\in\mathcal{A}_{v,\ell}$ be distinct, we need to show that $\phi_{\textup{even}}(I)\not=\phi_{\textup{even}}(\overline{I})$. This is immediate if $|I|\not=|\overline{I}|$ as $\phi_{\textup{even}}$ preserves size, so suppose $|I|=|\overline{I}|$.

If $I$ and $\overline{I}$ agree in every iteration of the process above, then the set of vertices that are modified in this process are the same for $I$ and $\overline{I}$. Let this set be $V$, then $I$ and $\overline{I}$ agree on $V$. Since $I\not=\overline{I}$, there exists $x\not\in V$ such that one of $I$ and $\overline{I}$ contains $x$, and the other does not. But since $x\not\in V$, $x$ is not modified by the process, so $\phi_{\textup{even}}(I)\not=\phi_{\textup{even}}(\overline{I})$ as they disagree on $x$.

Otherwise, let $i$ be minimal such that the above process for $I$ and $\overline{I}$ first disagree in the $i$-th iteration. Let $I_i$ and $\overline{I_i}$ be the sets in the beginning of the $i$-th iteration of the process for $I$ and $\overline{I}$, respectively. 

If $I$ and $\overline{I}$ disagree on $d_i$, then so do $I_i$ and $\overline{I_i}$ by~\ref{even:3}. From the Primary Check and Primary Shift steps and using~\ref{even:3}, $\phi_{\textup{even}}(I)\not=\phi_{\textup{even}}(\overline{I})$ as exactly one of them contains $u_i$. If $I$ and $\overline{I}$, and thus $I_i$ and $\overline{I_i}$ agree on $d_i$, then the Primary Check and Primary Shift steps are identical.

If $I_i'$ and $\overline{I_i}'$ disagree on $c_m(u_i)$ for some $2\leq m\leq r$, then the CAS step results in a disagreement between $\phi_{\textup{even}}(I)$ and $\phi_{\textup{even}}(\overline{I})$ on $c_m(d_i)$. Now suppose $I_i'$ and $\overline{I_i}'$ agree on $c_m(u_i)$ for all $2\leq m\leq r$. If $J_{i,m}$ and $\overline{J_{i,m}}$ differ for any $2\leq m\leq r$, then~\ref{even:3} and the injectivity of the CAS operation from Lemma~\ref{lem:injcas} imply that $\phi_{\textup{even}}(I)$ and $\phi_{\textup{even}}(\overline{I})$ differ. Otherwise, the CAS step is identical as well. 

Finally, if $I_i''$ and $\overline{I_i}''$ agree on $c_1(u_i)$, then the Secondary Check and Secondary Shift steps are identical, a contradiction to $I$ and $\overline I$ differing in the $i$-th iteration. If they disagree on $c_1(u_i)$ instead, then $\phi_{\textup{even}}(I)\not=\phi_{\textup{even}}(\overline I)$ as exactly one of them contains $p(d_i)$, completing the proof.
\end{proof}

\subsection{Case 2: Odd Distance $d(v,\ell)$}\label{sec:odd}
Now suppose $d(v,\ell)$ is odd. Let $I \in \mathcal{A}_{v,\ell}$, we construct $\phi_{\textup{odd}}(I)\in\mathcal{B}_{v,\ell}$ as follows. 

\medskip

\noindent\textbf{Initialisation:} Let $I_1=(I\setminus\{v\})\cup\{\ell\}$. Create two queues $Q_1^s$ and $Q_1^t$, containing just $p(\ell)$ and just $c_2(v)$, respectively. Let $L_1=\{\ell,p(\ell)\}$ and $V_1=\{v,c_2(v)\}$.

\medskip

\noindent\textbf{Iteration Step:}
At the beginning of the $i$-th iteration, we have some $I_i,L_i,V_i\subseteq V(T)$ such that the following conditions hold. 
\stepcounter{propcounter}
\begin{enumerate}[label = \textbf{\Alph{propcounter}\arabic{enumi}}]
    \item\label{odd:1} $|Q_i^s|=|Q_i^t|$ and $|I_{i}|=|I|$.
    \item\label{odd:2} $L_i\cup V_i=(\cup_{j=1}^i(Q_j^s\cup Q_j^t))\cup\{v,\ell\}$ is the set of all vertices in $T$ that have been in either queue, along with $v$ and $\ell$. $T[L_i]$ and $T[V_i]$ are disjoint subtrees of $T$. Every vertex in $Q_i^s\cup Q_i^t$ is a leaf in either $T[L_i]$ or $T[V_i]$, and has not been modified by this process.
    \item\label{odd:3} Every edge between vertices in $I_i$ contains a vertex in $Q_i^s$. For every $x\in Q_i^s$, it has exactly one neighbour in $I_i$, which is either $c_1(x)$ or $p(x)$, and it is $c_1(x)$ if and only if $x$ is in $L_i$ and is on $P_{v,\ell}$.
    \item\label{odd:4} $Q_i^t\cap I_i=\varnothing$. 
    \item\label{odd:5} The distance between $\ell$ and every vertex in $Q_i^s$ is odd, while the distance between $\ell$ and every vertex in $Q_i^t$ is even. 
    \item\label{odd:6} Suppose $s_i$ and $t_i$ are the oldest element in $Q_i^s$ and $Q_i^t$, respectively, then one of them is in $V_i$ and the other is in $L_i$.
\end{enumerate} 
Note that~\ref{odd:1}--\ref{odd:6} hold after initialisation, so the first iteration can be started.
\begin{enumerate}
    \item \textbf{Dequeue:} If $Q_i^s=\varnothing$, terminate, and return $\phi_{\textup{odd}}(I)=I_i$, noting that~\ref{odd:3} implies that $I_i$ is an independent set. Otherwise, by~\ref{odd:1}, $Q_i^t\not=\varnothing$ as well. Let the oldest element in $Q_i^s$ and $Q_i^t$ be $s_i$ and $t_i$, respectively. 
    \item \textbf{Conflict Check:} If $s_i\notin I_i$, let $I_{i+1}=I_i$, $L_{i+1}=L_i$, $V_{i+1}=V_i$, $Q_{i+1}^s=Q_i^s\setminus\{s_i\}$, $Q_{i+1}^t=Q_i^t\setminus\{t_i\}$, and advance to the $(i+1)$-th iteration, noting that~\ref{odd:1}--\ref{odd:6} hold for $i+1$. Otherwise, $s_i\in I_i$, continue with the current iteration.
    \item \textbf{Swap Action:} Let $I_{i+1}=(I_i\setminus \{s_i\}) \cup \{t_i\}$, noting that $t_i\not\in I_i$ by~\ref{odd:4}, so $|I_{i+1}|=|I_i|=|I|$ by~\ref{odd:1}. Since $s_i\in Q_i^s$, it has exactly one neighbour in $I_i$ by~\ref{odd:3}, which is either $c_1(s_i)$ or $p(s_i)$. By~\ref{odd:5}, $s_i$ is not a leaf in $T$, so it has $r$ children. Consider three cases.
    \begin{itemize}
        \item Suppose that the unique neighbour of $s_i$ in $I_i$ is $c_1(s_i)$, then $s_i\in L_i$ and $s_i$ is on $P_{v,\ell}$ by~\ref{odd:3}, and $t_i\in V_i$ by~\ref{odd:6}. Remove $s_i$ from $Q_i^s$ and $t_i$ from $Q_i^t$. If $t_i$ is not a leaf in $T$, then for every $2\leq m\leq r$ in order, add $c_m(t_i)$ to $Q_i^s$ and add $c_m(s_i)$ to $Q_i^t$. Then, note that by~\ref{odd:5} and using $d(v,\ell)$ is odd, we have that $p(s_i) \not= v$. Add $p(s_i)$ to $Q_i^t$ and add $c_1(t_i)$ to $Q_i^s$. If $t_i$ is a leaf in $T$, then do nothing. Let the resulting updated queues be $Q_{i+1}^s$ and $Q_{i+1}^t$.
        \item Suppose $s_i\in L_i$ and the unique neighbour of $s_i$ in $I_i$ is $p(s_i)$, then $t_i\in V_i$ by~\ref{odd:6}. Again, first remove $s_i$ from $Q_i^s$ and $t_i$ from $Q_i^t$. Then, if $t_i$ is not a leaf in $T$, for every $m\in[r]$ in order, add $c_m(s_i)$ to $Q_i^t$, and add $c_m(t_i)$ to $Q_i^s$, otherwise do nothing. Let the resulting updated queues be $Q_{i+1}^s$ and $Q_{i+1}^t$. 
        \item Suppose $s_i\in V_i$, then the unique neighbour of $s_i$ in $I_i$ is $p(s_i)$ by~\ref{odd:3}, and $t_i\in L_i$ by~\ref{odd:6}. Remove $s_i$ from $Q_i^s$ and $t_i$ from $Q_i^t$. If $t_i$ is a leaf in $T$, do nothing else. Otherwise, $t_i$ has $r$ children in $T$. For every $2\leq m\leq r$ in order, add $c_m(t_i)$ to $Q_i^s$ and add $c_m(s_i)$ to $Q_i^t$. Next, if $t_i$ is on $P_{v,\ell}$, then by~\ref{odd:2}, either $p(t_i)\not\in L_i\cup V_i$ or $p(t_i) = v$. If $p(t_i)\not=v$, add $p(t_i)$ to $Q_i^s$ and add $c_1(s_i)$ to $Q_i^t$, while if $p(t_i)=v$, do nothing. If instead $t_i$ is not on $P_{v,\ell}$, then $c_1(t_i)\not\in L_i\cup V_i$ by~\ref{odd:2}, in which case add it to $Q_i^s$ and add $c_1(s_i)$ to $Q_i^t$. Let the resulting updated queues be $Q_{i+1}^s$ and $Q_{i+1}^t$. 
    \end{itemize}
\end{enumerate}
Note that in all cases above,~\ref{odd:2} and~\ref{odd:3} imply that the new entrants to the queues are not in $L_i\cup V_i$. After updating $L_i$ to $L_{i+1}$ and $V_i$ to $V_{i+1}$ by including new elements in both queues appropriately, the $i$-th iteration is complete, with all of~\ref{odd:1}--\ref{odd:6} maintained, so we can advance to the $(i+1)$-th iteration. 

\medskip

\noindent\textbf{Termination:} The process must end as each vertex can only be added to either queue once, and each iteration removes a vertex from each queue, so $Q_i^s$ eventually becomes empty. The output $\phi_{\textup{odd}}(I)$ is in $\mathcal{B}_{v,\ell}$ because it is an independent set by~\ref{odd:3}, and it contains $\ell$ but not $v$ by the initialisation step. 

\medskip

Like before, it is straightforward to show that $\phi_{\textup{odd}}$ is injective. The proof is omitted as it is quite similar to that of Lemma~\ref{lem:injcas} and Lemma~\ref{lem:injeven}. 
\begin{lemma}\label{lem:injodd}
If $d(v,\ell)$ is odd, then $\phi_{\textup{odd}}:\mathcal{A}_{v,\ell}\to\mathcal{B}_{v,\ell}$ is injective. 
\end{lemma}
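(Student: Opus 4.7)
The plan is to mirror the structure used in the proofs of Lemma~\ref{lem:injcas} and Lemma~\ref{lem:injeven}. Fix two distinct $I,\overline{I}\in\mathcal{A}_{v,\ell}$ and aim to show $\phi_{\textup{odd}}(I)\neq\phi_{\textup{odd}}(\overline{I})$. Since $\phi_{\textup{odd}}$ is size-preserving, we may assume $|I|=|\overline{I}|$. Let $I_i,L_i,V_i,Q_i^s,Q_i^t$ and $\overline{I_i},\overline{L_i},\overline{V_i},\overline{Q_i^s},\overline{Q_i^t}$ denote the data at the start of the $i$-th iteration of the process applied to $I$ and to $\overline{I}$, respectively.

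The key observation is that the queues at iteration $i$ are determined entirely by the outcomes of the first $i-1$ iterations. Hence, if the two runs agree in their first $i-1$ iterations, then $Q_i^s=\overline{Q_i^s}$, $Q_i^t=\overline{Q_i^t}$, and both runs enter iteration $i$ with the same $s_i$ and $t_i$. By~\ref{odd:2}, $s_i$ has not been modified, so $s_i\in I_i$ is equivalent to $s_i\in I$, and similarly for $\overline{I}$. Thus the $i$-th iteration's action depends only on whether $s_i\in I$ (respectively $s_i\in\overline{I}$).

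Consider two cases. First, suppose the two runs agree on every iteration. Then they terminate together at some $i$ with $Q_i^s=\varnothing$, and $I$ and $\overline{I}$ must agree on every vertex that was ever the dequeued $s_j$, i.e.\ on every vertex that ever appeared in any $Q_j^s$. Combining~\ref{odd:2} and~\ref{odd:4}, every vertex that ever appeared in any $Q_j^t$ lies outside both $I$ and $\overline{I}$, so they also agree there. Together with $v\in I\cap\overline{I}$ and $\ell\notin I\cup\overline{I}$, this shows $I$ and $\overline{I}$ coincide on $L_i\cup V_i$. As $I\neq\overline{I}$, they must disagree on some vertex $x\notin L_i\cup V_i$; but~\ref{odd:2} guarantees that such $x$ is never modified, so $\phi_{\textup{odd}}(I)$ and $\phi_{\textup{odd}}(\overline{I})$ also disagree on $x$. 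In the second case, the two runs first disagree at some iteration $i$: exactly one of them performs the swap, so $\phi_{\textup{odd}}(I)$ and $\phi_{\textup{odd}}(\overline{I})$ differ on $t_i$, since $s_i$ and $t_i$ are never re-enqueued or modified in subsequent iterations.

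The main obstacle, as in the previous lemmas, is the bookkeeping required to justify the last claim --- that once $s_i$ and $t_i$ are dequeued in iteration $i$, neither is touched again. This follows from~\ref{odd:2}, which monotonically tracks the set of vertices that have ever been in a queue and ensures that any vertex can enter a queue at most once; a short check across the three Swap Action subcases verifies that the new entrants to the queues are genuinely fresh, lying outside $L_i\cup V_i$ at the time of their addition. Once this bookkeeping is in place, the argument reduces essentially mechanically to the template of Lemma~\ref{lem:injcas} and Lemma~\ref{lem:injeven}.
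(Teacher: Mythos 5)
Your proof is correct and follows exactly the template of the paper's proofs of Lemma~\ref{lem:injcas} and Lemma~\ref{lem:injeven}, which is precisely what the authors intend when they omit the proof of this lemma. You correctly identify the two key invariants that carry the argument over --- that~\ref{odd:2} ensures each vertex enters a queue at most once and remains unmodified while enqueued (so the conflict check depends only on the original $I$), and that~\ref{odd:4} ensures agreement on every vertex that ever sat in $Q_j^t$ --- and you correctly handle the extra wrinkle of the odd case (that $v$ and $\ell$ are part of $L_i\cup V_i$ and must be checked separately, which is immediate since both $I,\overline I\in\mathcal{A}_{v,\ell}$).
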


We can now prove Theorem~\ref{thm:main}. 
\begin{proof}[Proof of Theorem~\ref{thm:main}]
By Lemma~\ref{lem:injeven} and Lemma~\ref{lem:injodd}, $|\mathcal{A}_{v,\ell}|\leq|\mathcal{B}_{v,\ell}|$, so  \[|\mathcal{I}_{T}^{k}(v)| = |\mathcal{A}_{v,\ell}| + |\mathcal{C}_{v,\ell}|\leq|\mathcal{B}_{v,\ell}|+|\mathcal{C}_{v,\ell}|=|\cI_T^k(\ell)|.\qedhere\]
\end{proof}

\section{Multiple disjoint perfect trees}

In this section, we consider a forest $\mathcal{T}$ consisting of disjoint perfect trees. Note that every independent set in $\mathcal{T}$ is formed by taking the disjoint union of an independent set in each perfect tree within $\mathcal{T}$, and every such disjoint union of independent sets is an independent set in $\mathcal{T}$. Hence, by Theorem~\ref{thm:main}, for every $k\geq 1$, $|\cI_{\mathcal{T}}^k(v)|$ can be maximised by taking $v$ to be a leaf of some perfect tree in $\mathcal{T}$.  

To prove Theorem~\ref{thm:forest}, we need to determine which perfect tree in $\mathcal{T}$ should the maximising leaves come from. This is done via the following two lemmas. The first one compares the leaves of two perfect trees with different arities. 

\begin{lemma}\label{lemma:differentarity}
Let $\mathcal{T}$ be a forest consisting of disjoint perfect trees. Let $T_1, T_2$ be two perfect trees in $\mathcal{T}$ with arities $r_1$ and $r_2$, respectively, where $r_1 < r_2$. Let $\ell_1$ be a leaf of $T_1$ and $\ell_2$ be a leaf of $T_2$. Then $|\mathcal{I}_{\mathcal{T}}^{k}(\ell_1)| \le |\mathcal{I}_{\mathcal{T}}^{k}(\ell_2)|$.
\end{lemma}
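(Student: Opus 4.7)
The plan is to reduce the inequality to a coefficient-wise comparison of two polynomials, and then verify the comparison via an elementary chain of bounds. Writing $F_G(x)$ for the independence polynomial of a graph $G$, the identity $|\cI_G^k(v)| = |\cI_{G-N[v]}^{k-1}|$ applied to a leaf $\ell_i$ (whose closed neighbourhood is $\{\ell_i, P_i\}$ with $P_i = p(\ell_i)$) shows that $|\cI_{\mathcal{T}}^k(\ell_i)| = [x^{k-1}]\, F_{\mathcal{T} - \{\ell_i, P_i\}}(x)$. Let $U_i$ denote the ``upper part'' of $T_i$ obtained by removing $P_i$ and all its children. Then $\mathcal{T} - \{\ell_i, P_i\}$ is the disjoint union of $U_i$, the $r_i - 1$ remaining leaf-siblings of $\ell_i$ (now isolated), and the other components of $\mathcal{T}$. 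After cancelling the common factor coming from the components $T_j$ with $j \notin \{1, 2\}$ (whose independence polynomial has non-negative coefficients), the lemma reduces to the coefficient-wise inequality
\[
F_{U_1}(1+x)^{r_1-1} F_{T_2} \;\leq\; F_{T_1} F_{U_2}(1+x)^{r_2-1}.
\]

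To simplify further, I plan to use the decomposition $F_{T_i} = F_{U_i}(1+x)^{r_i} + W_{U_i}\,x$, where $W_{U_i}$ is the independence polynomial of $U_i$ restricted to independent sets not containing the grandparent $p(P_i)$ of $\ell_i$ (with the convention $W_{U_i} = F_{U_i} = 1$ when $U_i$ is empty, as occurs if $h_i = 2$). This identity follows by partitioning independent sets of $T_i$ according to whether $P_i$ belongs. Substituting it into the previous display, the term $F_{U_1} F_{U_2}(1+x)^{r_1+r_2-1}$ cancels from both sides, and after pulling out the common non-negative factor $x\,(1+x)^{r_1-1}$, it remains to show
\[
F_{U_1} W_{U_2} \;\leq\; (1+x)^{r_2-r_1}\, W_{U_1} F_{U_2}
\]
coefficient-wise.

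The crux of the argument is the following elementary fact, which holds for any graph $H$ and any vertex $v \in V(H)$: $F_H \leq (1+x)\, F_{H-v}$ coefficient-wise. This follows from the standard identity $F_H = F_{H-v} + x\, F_{H-N[v]}$ combined with the observation $F_{H-N[v]} \leq F_{H-v}$, which holds because $H - N[v]$ is obtained from $H - v$ by deleting additional vertices and so has no more independent sets of each size. Applied with $H = U_1$ and $v = p(P_1)$, this yields $F_{U_1} \leq (1+x)\, W_{U_1}$. Combined with the trivial bound $W_{U_2} \leq F_{U_2}$ and the inequality $(1+x) \leq (1+x)^{r_2-r_1}$ (using $r_2 > r_1$), the target inequality then follows from the chain
\[
F_{U_1} W_{U_2} \;\leq\; (1+x)\, W_{U_1} W_{U_2} \;\leq\; (1+x)\, W_{U_1} F_{U_2} \;\leq\; (1+x)^{r_2-r_1}\, W_{U_1} F_{U_2}.
\]
The main (mild) obstacle is verifying the decomposition $F_{T_i} = F_{U_i}(1+x)^{r_i} + W_{U_i}\,x$ uniformly in the boundary cases $h_i = 2$ (where $U_i$ is empty) and $h_i = 3$ (where $p(P_i)$ coincides with the root of $T_i$); with the convention above for $W_{U_i}$, the rest of the argument is pure polynomial arithmetic with non-negative coefficients.
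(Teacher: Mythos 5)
Your proof is correct, but it takes a genuinely different route from the paper. The paper constructs an explicit injection $\phi:\mathcal{A}_{\ell_1,\ell_2}\to\mathcal{B}_{\ell_1,\ell_2}$ by a short three-step algorithm: swap $\ell_1$ for $\ell_2$; if $p(\ell_2)$ was occupied, shift it to $p(\ell_1)$; then resolve any resulting conflicts among $p(\ell_1)$'s neighbours (its $r_1-1$ leaf-siblings of $\ell_1$ and $p(p(\ell_1))$) by parking them on $r_1$ of the now-vacant leaf-siblings of $\ell_2$, where the hypothesis $r_1<r_2$ is used precisely to guarantee $c_{r_1+1}(p(\ell_2))$ exists as a spot for $p(p(\ell_1))$. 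Your argument instead passes to independence polynomials, cancels the common factor from the other components and from $F_{U_1}F_{U_2}(1+x)^{r_1+r_2-1}$, and reduces the claim to the coefficient-wise inequality $F_{U_1}W_{U_2}\le(1+x)^{r_2-r_1}W_{U_1}F_{U_2}$, which you dispatch by the general bound $F_H\le(1+x)F_{H-v}$, the trivial bound $W_{U_2}\le F_{U_2}$, and $(1+x)\le(1+x)^{r_2-r_1}$; the hypothesis $r_1<r_2$ enters through that last factor. The paper's injection is more in keeping with the combinatorial machinery developed throughout (CAS-style vertex swaps), and makes the ``where does the extra room come from'' intuition concrete. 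Your generating-function route is shorter, avoids case analysis on the structure of the neighbourhood, and isolates the single algebraic inequality that drives the comparison; it also makes transparent that the argument uses nothing about $U_1,U_2$ beyond the vertex-deletion identity, so it would transfer to more general rooted trees attached above the two stars. One small point worth flagging explicitly in a write-up: both your proof and the paper's implicitly assume $h_1,h_2\ge2$ so that $p(\ell_i)$ exists (the degenerate single-vertex case has ambiguous arity anyway), and your boundary conventions for $W_{U_i}$ when $h_i\in\{2,3\}$ should be stated, as you note, before invoking $F_{U_1}\le(1+x)W_{U_1}$ in the $h_1=2$ case (where it reads $1\le 1+x$ and still holds).
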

\begin{proof}
By symmetry, we may assume that $\ell_1$ is the leftmost leaf of $T_1$, and $\ell_2$ is the leftmost leaf of $T_2$. Let $\mathcal{A}_{\ell_1,\ell_2}=\{I\in\cI^k_{\mathcal{T}}:\ell_1\in I, \ell_2\not\in I\}$ and $\mathcal{B}_{\ell_1,\ell_2}=\{I\in\cI^k_{\mathcal{T}}:\ell_1\not\in I, \ell_2\in I\}$. Like in Section~\ref{sec:main}, it suffices to construct an injection $\phi: \mathcal{A}_{\ell_1,\ell_2}\to\mathcal{B}_{\ell_1,\ell_2}$. Let $I \in \mathcal{A}_{\ell_1,\ell_2}$.
\begin{enumerate}
    \item\textbf{Leaf Swap:} Set $I_1= (I \setminus \{\ell_1\}) \cup \{\ell_2\}$.
    \item\textbf{Primary Check:} If $p(\ell_2)\not\in I_1$, then $I_1\in\mathcal{B}_{\ell_1,\ell_2}$. In this case, let $\phi(I)=I_1$ and terminate. Otherwise, $p(\ell_2)\in I_1$ and we have a conflict. Set $I_2=(I_1\setminus\{p(\ell_2)\})\cup\{p(\ell_1)\}$ and continue.
    \item\textbf{Secondary Check:} For each $2\leq i\leq r_1$, if $c_i(p(\ell_1))\in I_2$, then remove it from $I_2$ and replace it with $c_i(p(\ell_2))$, otherwise do nothing. If $p(p(\ell_1))\in I_2$, then remove it and replace it with $c_{r_1+1}(p(\ell_2))$, otherwise do nothing. Note that as $p(\ell_2)\in I_1$, $c_i(p(\ell_2))\not\in I_2$ for every $i\in[r_2]$, and $c_{r_1+1}(p(\ell_2))$ exists as $r_1<r_2$. Let $I_3$ be the resulting set. Observe that $I_3\in\mathcal{B}_{\ell_1,\ell_2}$. Let $\phi(I)=I_3$ and terminate.
\end{enumerate}
It is clear that $\phi$ is an injection, so $|\mathcal{I}_{\mathcal{T}}^{k}(\ell_1)| \le |\mathcal{I}_{\mathcal{T}}^{k}(\ell_2)|$.
\end{proof}

By Lemma~\ref{lemma:differentarity}, $|\cI_{\mathcal{T}}^k(v)|$ is maximised by the leaves of a perfect tree with the highest arity in $\mathcal{T}$. The next lemma compares leaves in two perfect trees with the same arity. 
\begin{lemma}\label{lemma:samearity}
Let $\mathcal{T}$ be a forest consisting of disjoint perfect trees. Let $T_1, T_2$ be perfect trees in $\mathcal{T}$ with equal arity $r$ containing $h_1$ and $h_2$ levels, respectively, where $h_1 < h_2$. Let $\ell_1 \in V(T_1)$ and $\ell_2 \in V(T_2)$ be leaves.
\begin{itemize}
    \item If $h_1$ is even, then $|\mathcal{I}_{\mathcal{T}}^{k}(\ell_1)| \le |\mathcal{I}_{\mathcal{T}}^{k}(\ell_2)|$.
    \item If $h_1$ is odd, then $|\mathcal{I}_{\mathcal{T}}^{k}(\ell_1)| \ge |\mathcal{I}_{\mathcal{T}}^{k}(\ell_2)|$.
\end{itemize}
\end{lemma}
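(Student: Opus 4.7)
The plan is to induct on the smaller height $h_1$, using a swap-based partial bijection that reduces each inductive step to a Lemma~\ref{lemma:samearity} instance with the same parity but strictly smaller $h_1$.

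Since independent sets on disjoint components multiply and the other components of $\mathcal{T}$ contribute the same factor on both sides, I may assume $\mathcal{T} = T_1 \cup T_2$ and, by symmetry, take $\ell_1, \ell_2$ to be the leftmost leaves. Defining $\mathcal{A}_{\ell_1,\ell_2}$ and $\mathcal{B}_{\ell_1,\ell_2}$ exactly as in Lemma~\ref{lemma:differentarity}, the claim reduces to comparing their sizes in the direction determined by the parity of $h_1$. Let $w$ be the ancestor of $\ell_2$ in $T_2$ at distance $h_1 - 1$; then the subtree $T_2^w$ is a perfect $r$-ary tree with $h_1$ levels, so it is isomorphic to $T_1$ via some $\psi$ sending the root $\rho_1$ of $T_1$ to $w$ and $\ell_1$ to $\ell_2$. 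I will use the ``swap''
\[
\phi(I) \;=\; \psi^{-1}(I \cap V(T_2^w)) \;\sqcup\; \psi(I \cap V(T_1)) \;\sqcup\; \bigl(I \cap (V(T_2) \setminus V(T_2^w))\bigr),
\]
which is a size-preserving involution that carries $\mathcal{A}_{\ell_1,\ell_2}$ into $\mathcal{B}_{\ell_1,\ell_2}$, except that $\phi(I)$ may contain the forbidden edge $\{w, p(w)\}$. A direct check shows the failure occurs exactly when $\rho_1 \in I$ and $p(w) \in I$; calling these inputs $\mathcal{A}_{bad}, \mathcal{B}_{bad}$ on the two sides, $\phi$ gives a bijection $\mathcal{A}_{\ell_1,\ell_2} \setminus \mathcal{A}_{bad} \leftrightarrow \mathcal{B}_{\ell_1,\ell_2} \setminus \mathcal{B}_{bad}$, hence $|\mathcal{A}_{\ell_1,\ell_2}| - |\mathcal{B}_{\ell_1,\ell_2}| = |\mathcal{A}_{bad}| - |\mathcal{B}_{bad}|$.

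To analyze the bad sets, I note that fixing $\rho_1 \in I$ forces the children of $\rho_1$ out, splitting $I \cap V(T_1)$ into $\{\rho_1\}$ together with root-avoiding independent sets in each of the $r$ subtrees of $\rho_1$'s children; in turn, root-avoidance on the subtree containing $\ell_1$ decomposes further into independent sets on its $r$ sub-subtrees, each with $h_1 - 2$ levels. Symmetrically, $p(w) \in I$ decomposes $I \cap V(T_2^{p(w)})$ into $\{p(w)\}$ plus root-avoiding independent sets on the $r$ subtrees of $p(w)$'s children; root-avoidance on $T_2^w$ further splits it into independent sets on $w$'s $r$ child subtrees, each with $h_1 - 1$ levels. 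The part of $I \cap V(T_2)$ strictly above $p(w)$ contributes identically on both sides. After cancelling all the common nonnegative factors (the $2(r-1)$ non-distinguished sub-subtrees and the ``above $p(w)$'' part of $T_2$), I obtain
\[
|\mathcal{A}_{bad}| - |\mathcal{B}_{bad}| \;=\; \sum_{n \geq 0} C(n)\bigl(|\mathcal{A}'_{k - 2 - n}| - |\mathcal{B}'_{k - 2 - n}|\bigr),
\]
where $C(n) \geq 0$ counts the common choices of total size $n$, and $|\mathcal{A}'_{k'}|, |\mathcal{B}'_{k'}|$ are the $k'$-sized analogues of $\mathcal{A}_{\ell_1,\ell_2}, \mathcal{B}_{\ell_1,\ell_2}$ for Lemma~\ref{lemma:samearity} applied to the forest consisting of the distinguished sub-subtrees (of heights $h_1 - 2$ and $h_1 - 1$). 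Since $h_1 - 2$ has the same parity as $h_1$ and is strictly smaller, the inductive hypothesis provides the correct sign for every sub-difference, and this propagates through the nonnegative weighting to $|\mathcal{A}_{\ell_1,\ell_2}| - |\mathcal{B}_{\ell_1,\ell_2}|$. The base cases $h_1 \in \{1, 2\}$ are immediate: for $h_1 = 1$, $\rho_1 = \ell_1$ forces $\mathcal{B}_{bad} = \varnothing$, giving $|\mathcal{A}| \geq |\mathcal{B}|$; for $h_1 = 2$, $\rho_1$ and $\ell_1$ are adjacent so $\mathcal{A}_{bad} = \varnothing$, giving $|\mathcal{A}| \leq |\mathcal{B}|$, matching the odd and even parities respectively.

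The main obstacle is the clean factorization in the bad-set analysis: verifying that exactly two distinguished sub-subtrees (of heights $h_1 - 2$ and $h_1 - 1$) carry the differing constraints between $\mathcal{A}_{bad}$ and $\mathcal{B}_{bad}$, while everything else—including, crucially, the portion of $T_2$ above $p(w)$, which is the only place $h_2$ enters—contributes an identical nonnegative multiplicative factor. This is what lets the induction shed the parameter $h_2$ and reduce cleanly to a Lemma~\ref{lemma:samearity} instance with heights $(h_1 - 2, h_1 - 1)$.
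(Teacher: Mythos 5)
Your proof is correct, and it takes a genuinely different route from the paper's. The paper builds the injection explicitly by a queue-based propagation (a close cousin of the CAS operation from Section~2), maintaining invariants as the BFS walks an embedded copy $\varphi(V(T_1))\subseteq V(T_2)$, and uses the parity of $h_1$ to argue the propagation never escapes that copy. You instead apply the one-shot swap involution $\phi$ exchanging the contents of $T_1$ and $T_2^w$, observe that the unique failure mode is the single boundary edge $\{w,p(w)\}$, and compare the resulting bad sets by a product decomposition that factors out every common piece, leaving a strictly smaller instance of the same lemma with heights $(h_1-2,h_1-1)$. This is a clean exchange: the paper's approach gives an explicit injection in one pass but requires bookkeeping invariants; yours trades that for an induction on $h_1$, but the induction is transparent (each step peels two levels, the parity is preserved, and the two base cases $h_1\in\{1,2\}$ make the sign of the inequality obvious). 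Your reduction also cleanly isolates why $h_2$ is irrelevant: it only enters through the part of $T_2$ above $p(w)$, which is a common multiplicative factor. The key verifications hold: $\phi$ restricted to $\mathcal{A}_{\ell_1,\ell_2}\setminus\mathcal{A}_{bad}$ is an involutive bijection onto $\mathcal{B}_{\ell_1,\ell_2}\setminus\mathcal{B}_{bad}$ (the image never lands in $\mathcal{B}_{bad}$ since that would force $w,p(w)\in I$), and the factorization of $|\mathcal{A}_{bad}|-|\mathcal{B}_{bad}|$ into $\sum_n C(n)\bigl(|\mathcal{A}'_{k-2-n}|-|\mathcal{B}'_{k-2-n}|\bigr)$ is correct. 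One small slip: the number of non-distinguished sub-subtrees you factor out is $4(r-1)$ rather than $2(r-1)$ (you have $r-1$ siblings at the child level and $r-1$ at the grandchild level on each of the two sides); this is purely a counting typo and does not affect the argument, which only needs that these factors are common and nonnegative.
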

\begin{proof}
Let $\mathcal{A}_{\ell_1,\ell_2}=\{I\in\cI^k_{\mathcal{T}}:\ell_1\in I, \ell_2\not\in I\}$ and $\mathcal{B}_{\ell_1,\ell_2}=\{I\in\cI^k_{\mathcal{T}}:\ell_1\not\in I, \ell_2\in I\}$. This is very similar to the proofs in Section~\ref{sec:main}, so we will be less formal and omit some details. 
Let $\varphi:V(T_1)\to V(T_2)$ be an injective homomorphism mapping $\ell_1$ to $\ell_2$. Define $\psi:V(T_1)\cup\varphi(V(T_1))\to V(T_1)\cup\varphi(V(T_1))$ by letting $\psi(v)=\varphi(v)$ if $v\in V(T_1)$, and $\psi(v)=\varphi^{-1}(v)$ if $v\in\varphi(V(T_1))$. In other words, $\psi$ maps every vertex in $V(T_1)\cup\varphi(V(T_1))$ to the corresponding vertex in its counterpart. 

\textbf{Case 1:} $h_1$ is even. We construct an injection $\phi:\mathcal{A}_{\ell_1,\ell_2}\to\mathcal{B}_{\ell_1,\ell_2}$ using the following algorithm. Initialise with $I_1 = (I \setminus \{\ell_1\}) \cup \{\ell_2\}$ and two queues $Q_1^s=\{p(\ell_2)\}$ and $Q_1^t=\{p(\ell_1)\}$. Throughout, we will maintain the key condition that if $v\in Q_i^s$, then $\psi(v)\in Q_i^t$ and $\psi(v)\not\in I_i$. 

In the $i$-th iteration, given $I_i$, $Q_i^s$, and $Q_i^t$, if $Q_i^s=\varnothing$, terminate and return $\phi(I)=I_i$. Otherwise, dequeue the oldest element $s_i$ in $Q_i^s$ and the corresponding element $\psi(s_i)$ in $Q_i^t$. If $s_i\not\in I_i$, advance to the next iteration with $I_{i+1}=I_i$. Otherwise, $s_i\in I_i$. Let $I_{i+1}=(I_i\setminus\{s_i\})\cup\{\psi(s_i)\}$. For each neighbour $v$ of $\psi(s_i)$ that has not been visited, add $v$ to $Q_i^s$ and add $\psi(v)$ to $Q_i^t$. Note that $\psi(v)$ also has not been visited, and since $\psi(v)$ is a neighbour of $s_i$ which is in $I_i$, we have $\psi(v)\not\in I_{i+1}$, so the key condition above is maintained. 

Furthermore, before termination, the vertices on the path $P$ between $p(\ell_1)$ and the root $x_1$ of $T_1$ alternate from being added to $Q_i^t$ and being added to $Q_i^s$, starting with $p(\ell_1)$ being added to $Q_i^t$ in the initialisation. Since $h_1$ is even, either the algorithm terminates before reaching $x_1$, or $x_1$ is added to $Q_i^t$, so vertices outside of $\varphi(V(T_1))$ in $T_2$ are never visited and the algorithm above is well-defined. Similar to the proofs in Section~\ref{sec:main}, the algorithm must terminate and output $\phi(I)\in\mathcal{B}_{\ell_1,\ell_2}$, and $\phi$ is injective. 

\textbf{Case 2:} $h_1$ is odd. We similarly construct an injection $\phi:\mathcal{B}_{\ell_1,\ell_2} \to \mathcal{A}_{\ell_1,\ell_2}$. Initialise with $I_1 = (I \setminus \{\ell_2\}) \cup \{\ell_1\}$ and two queues $Q_1^s=\{p(\ell_1)\}$ and $Q_1^t=\{p(\ell_2)\}$. Then, we proceed with the algorithm as in Case 1. Since $h_1$ is now odd and $p(\ell_1)$ is added to $Q_1^s$ in the initialisation, either the algorithm terminates before reaching the root $x_1$ of $T_1$, or $x_1$ is added to $Q_i^t$. This again shows that vertices outside of $\varphi(V(T_1))$ in $T_2$ are never visited and the algorithm is well-defined. At termination, it outputs $\phi(I)\in\mathcal{A}_{\ell_1,\ell_2}$, and $\phi$ is injective. 
\end{proof}

We can now put everything together and quickly deduce Theorem~\ref{thm:forest}.
\begin{proof}[Proof of Theorem~\ref{thm:forest}]
By Theorem~\ref{thm:main}, $|\cI_{\mathcal{T}}^k(v)|$ is maximised by the leaves of some perfect tree in $\mathcal{T}$. By Lemma~\ref{lemma:differentarity}, $|\cI_{\mathcal{T}}^k(v)|$ is maximised by a perfect tree with the highest arity in $\mathcal{T}$. Finally, by Lemma~\ref{lemma:samearity}, if every perfect tree with the highest arity in $\mathcal{T}$ has an even number of levels, then a maximising leaf belongs to the one with the highest number of levels, otherwise it belongs to the one with the lowest odd number of levels.
\end{proof}

\bibliographystyle{plain}  
\bibliography{reference}

\end{document}